\newtheorem{theorem}{Theorem}
\newtheorem*{thm*}{Theorem}
\newtheorem{lemma}{Lemma}
\begin{document}
\title{ Some Generalizations of the Hellinger Theorem for Second Order Difference Equations
with Matrix Elements}
\date{}
\author{
A.S. Osipov \thanks{Scientific-Research Institute for System
Studies, Russian Academy of Sciences, e-mail:
\it{osipa68@yahoo.com}}
\thanks{This work was supported by RFBR: project 11-01-00790}}
\maketitle
\begin{abstract}
We obtain several generalizations the Hellinger theorem about $l^2$
solutions of difference equations:  instead of second order
equations  and $ l^2$-solutions, we consider second-order equations
with matrix coefficients and their solutions in $l^p,\; 1\le p\le
\infty$. In particular it is shown that for a certain class of
symmetric difference equations an  analog of this theorem holds
 for $1\le p \le 2$, but it does not hold  for  $p>2$.
\end{abstract}

\section{Introduction}
 In the study of the spectral properties of infinite Jacoby matrices and
analytic properties of  continued $J$-fractions,  a significant
place belongs to the result established  by E. Hellinger ~\cite{H1,
H-W, W}:
\begin{thm*}
Suppose that  for some $z=z_0\in \mathbb{C}$,  any solution
$u=u(z)=(u_{i}(z))_{i=0}^\infty$ of the
infinite system of the difference equations
\begin{eqnarray*}
a_{i-1}u_{i-1}+b_i u_i +a_i u_{i+1}=z u_i,\quad
i\ge 1,\\
a_i,\;b_i \in \mathbb{C},\quad
a_i\ne 0,
\end{eqnarray*}
satisfies the condition $\sum_{i=0}^\infty |u(z_0)|^2<\infty$
(and therefore,  belongs to the space $l^2$).  Then, for any
$z\in \mathbb{C}$ and $M>0$,  the series $\sum_{i=0}^\infty |u(z)|^2$
converges uniformly for $|z-z_0|<M$.
\end{thm*}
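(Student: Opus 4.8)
The plan is to treat the equation at a general $z$ as a forced version of the equation at $z_0$ and to convert it into a Volterra-type summation equation whose kernel is built from a fixed fundamental system at $z_0$. Since the difference equation is of second order, its solution space at any fixed spectral parameter is two-dimensional; fix two linearly independent solutions $\phi=(\phi_i)$ and $\psi=(\psi_i)$ at $z=z_0$, which by hypothesis both lie in $l^2$. A direct computation shows that their Casoratian $W=a_i(\phi_i\psi_{i+1}-\phi_{i+1}\psi_i)$ is independent of $i$, and it is nonzero because $\phi,\psi$ are independent. Writing the equation at $z$ as
\[
a_{i-1}u_{i-1}+b_iu_i+a_iu_{i+1}-z_0u_i=(z-z_0)u_i,
\]
I would apply discrete variation of parameters to represent any solution $u(z)$ with prescribed initial data $(u_0,u_1)$ (chosen independently of $z$) as
\[
u_i(z)=v_i+(z-z_0)\sum_{j=1}^{i-1}\frac{\phi_i\psi_j-\phi_j\psi_i}{W}\,u_j(z),
\]
where $v=(v_i)$ is the solution at $z_0$ carrying the same initial data; crucially $v\in l^2$ because \emph{every} solution at $z_0$ belongs to $l^2$. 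The kernel is causal (upper triangular in $j$), so the inner sum is finite for each $i$ and the representation is an honest identity for the already-existing solution $u(z)$.

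Next I would derive an a priori $l^2$ bound on the partial sums $S_n(z)=\sum_{i=1}^n|u_i(z)|^2$ that is uniform for $|z-z_0|<M$. Taking absolute values and using $|\phi_i\psi_j-\phi_j\psi_i|\le|\phi_i||\psi_j|+|\phi_j||\psi_i|$ splits the inner sum into two pieces; on each I apply the Cauchy--Schwarz inequality to pull out the finite norms $\|\phi\|_2,\|\psi\|_2$ and leave a factor $\sqrt{S_{i-1}(z)}$. Squaring and summing in $i$ then yields, with $c_i=\frac{4M^2}{|W|^2}\big(\|\psi\|_2^2|\phi_i|^2+\|\phi\|_2^2|\psi_i|^2\big)$, an estimate of the form $S_n\le 2\|v\|_2^2+\sum_{i=1}^n c_i\,S_{i-1}$. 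The sequence $(c_i)$ is summable, with $\sum_i c_i\le \frac{8M^2}{|W|^2}\|\phi\|_2^2\|\psi\|_2^2$, so the discrete Gronwall inequality gives $S_n\le 2\|v\|_2^2\exp\!\big(\sum_i c_i\big)\le C_M$, a constant independent of $n$ and of $z$ in the disc. This proves $u(z)\in l^2$ and, by linearity over the fixed basis of initial data, that the whole solution space at $z$ lies in $l^2$.

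Finally, uniform convergence follows from a tail estimate: from $|u_i(z)|^2\le 2|v_i|^2+c_i S_{i-1}(z)\le 2|v_i|^2+C_M c_i$ I obtain $\sum_{i\ge N}|u_i(z)|^2\le 2\sum_{i\ge N}|v_i|^2+C_M\sum_{i\ge N}c_i$, and both tails tend to $0$ as $N\to\infty$ uniformly in $z$ for $|z-z_0|<M$, which is exactly the asserted locally uniform convergence.

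The step I expect to be the main obstacle is closing the a priori estimate: the crude bound only gives $S_n\le 2\|v\|_2^2+(\text{const})\,S_n$ with a constant that is finite but not small, so it cannot be absorbed directly. The essential point is to exploit the monotonicity of $S_n$ together with the summability of $(c_i)$—itself a direct consequence of $\phi,\psi\in l^2$—and to feed this into the discrete Gronwall inequality. A secondary technical care point is verifying the precise form of the variation-of-parameters kernel, in particular the vanishing of its first terms, so that the fixed $l^2$ datum $v$ correctly carries the initial conditions for every $z$.
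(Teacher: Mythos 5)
Your proof is correct, and it follows the same skeleton as the paper's argument (the proof of Theorem \ref{th1}, which contains the scalar $p=q=2$ Hellinger theorem as a special case): write the equation at $z$ as the equation at $z_0$ forced by $(z-z_0)u$, and convert it by discrete variation of parameters into a Volterra summation identity whose kernel is built from a fundamental system at $z_0$ --- your $(\phi_i\psi_j-\phi_j\psi_i)/W$ is exactly the scalar form of the kernel $P_j(z_0)Q_i^{+}(z_0)-Q_j(z_0)P_i^{+}(z_0)$ in the paper's representation (\ref{rep1}), and in the scalar case the adjoint equation coincides with the original one, so your hypothesis suffices. Where you genuinely diverge is in closing the a priori estimate, precisely the obstacle you flagged. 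The paper re-anchors the representation at a large index $k$: since $M_k^{2}(z_0)$ and $M_k^{2,+}(z_0)$ (the tail norms of the fundamental system) tend to zero, one chooses $k_0=k_0(M)$ with $\vert z-z_0\vert M_{k_0}^{2,+}(z_0)M_{k_0}^{2}(z_0)\le \frac14$ and absorbs the Volterra term into the left-hand side, getting $N_{k_0,j}\le 4C_{k_0}M_{k_0}^{2}(z_0)$ uniformly in $j$ and in $\vert z-z_0\vert<M$. You instead keep the base point at $0$ and invoke the discrete Gronwall inequality, which is legitimate here because your recursion bounds $S_n$ by $2\Vert v\Vert_2^2+\sum_{i\le n}c_iS_{i-1}$ with the strictly earlier $S_{i-1}$ (Volterra causality) and summable $c_i$; the product bound $\prod(1+c_i)\le\exp(\sum c_i)$ then yields a constant uniform in $n$ and $z$. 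Each route buys something: your Gronwall closure avoids a point the paper passes over silently --- after re-anchoring, the constants $C_k^1(z),C_k^2(z)$ depend on $z$ and must be bounded on the disc (they are, being determined by the solution up to index $k+1$, hence polynomial in $z$, but this deserves a remark) --- at the price of constants of size $\exp(O(M^2))$; the paper's tail-absorption gives sharper constants and, more importantly, transfers verbatim to the general setting of Theorem \ref{th1} ($l^p$/$l^q$ via H\"older, matrix coefficients, and the separate adjoint equation (\ref{secv})), where it yields the key theorems of the paper, though your Gronwall closure would in fact also survive that generalization after raising to the $p$-th power. Your tail estimate $\sum_{i\ge N}\vert u_i(z)\vert^2\le 2\sum_{i\ge N}\vert v_i\vert^2+C_M\sum_{i\ge N}c_i$ correctly delivers the uniform convergence asserted in the Hellinger statement, which the paper obtains in the equivalent form of a $j$- and $z$-uniform bound on the tail sums $N_{k,j}^p$.
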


This  theorem was applied to the study of the essential spectrum
of  second order difference operators in ~\cite{Be}.  It should be
observed that since  $z_0$ is an {\it arbitrary} complex number,
the Hellinger theorem makes the study of the deficiency indices of
symmetric  second order difference operators more simple than  a
similar investigation for differential Sturm-Liouville operators.
For some classes of difference operators of an arbitrary order, an
extension of the Hellinger theorem was obtained in ~\cite{Os1} (
with the space $l^2$ replaced by $l^p,\;1\le p \le \infty$).
 The goal of this paper is to find a similar extension for the
 second order difference equations with matrix coefficients.These
 issues are essential in the analysis of properties of continued
 fractions with matrix (or operator) elements ~\cite{S-I,Os11,I}.

\section{Preliminaries}
As mentioned above, there is a connection between the Hellinger
theorem and some spectral properties of the infinite Jacoby
matrices. Instead of a Jacoby matrix, here we consider the infinite
three-diagonal matrix  $A=(A_{i,j})_{i,j=0}^\infty$
$$
A=\begin{pmatrix} A_{0,0}&A_{0,1}&O&O&\dots\\
   A_{1,0}&A_{1,1}&A_{1,2}&O&\dots\\
      O&A_{2,1}&A_{2,2}&A_{2,3}&\dots\\
\vdots&\vdots&\vdots&\vdots&\ddots\\
\end{pmatrix},
$$
where $A_{i,j}$ is a square matrix of order $n$ whose elements are
complex numbers; $O$ is a zero matrix of order $n$. Also assume that
$A_{i+1,i},A_{i,i+1},\; i \ge 0 $ are invertible. Then $A$ generates
a linear operator in the space $l_n^2$ of sequences
$u=(u_0,u_1,\dots)$, where the vector column $u_j \in \Bbb C^n$ with
inner product $ (u,v)=\sum_{j=0}^{\infty} v_j^*u_j$. For this
operator we will keep the same notation $A$.

To the matrix $A$ we assign second-order finite-difference
equations in the matrices $Y_j, Y_j^{+}$ of order $n$:
\begin{eqnarray}
\label{fir}
l(Y)_j\equiv A_{j,j-1}Y_{j-1}+A_{j,j}Y_{j}+A_{j,j+1}Y_{j+1}=z Y_{j},\\
\label{sec} l^{+}(Y^+)\equiv
Y_{j-1}^{+}A_{j-1,j}+Y_{j}^{+}A_{j,j}+Y_{j+1}^{+}A_{j+1,j}= z
Y_{j}^{+}, \\
j \ge 0, \;  z \in \Bbb C; \quad A_{0,-1}=A_{-1,0}=-E,\nonumber
\end{eqnarray}
where $E$ is a unit matrix.

Denote by
$P(z)=\{P_j(z)\}_{j=-1}^\infty,\,Q(z)=\{Q_j(z)\}_{j=-1}^\infty,\;
P^{+}(z)=\{P^{+}_j(z)\}_{j=-1}^\infty,$ \linebreak
$Q^{+}(z)=\{Q_j^{+}(z)\}_{j=-1}^\infty $ the solutions of
(\ref{fir}) and (\ref{sec}) respectively, satisfying the initial
conditions
$$
\gathered
P_{-1}(z)=P_{-1}^{+}(z)=Q_0(z)=Q_0^{+}(z)=E;\\
P_{0}(z)=P_{0}^{+}(z)=Q_{-1}(z)=Q_{-1}^{+}(z)=O;
\endgathered
$$
These solutions are matrix polynomials in $z$; $Q(z), Q^{+}(z)$ and
$P(z), P^{+}(z)$ are analogs of the polynomials of the first and the
second kind for scalar Jacobi matrices ~\cite{Ah}. They play an
important role in the spectral analysis of the corresponding
operator $A$. For example, if $A$ is bounded, then for any $z$ from
its resolvent set
$$ \varlimsup_{m\to \infty} \Vert Q_m(z) \Vert ^{\frac{1}{m}}>1,
\quad \varlimsup_{m\to \infty} \Vert Q_m^+(z) \Vert
^{\frac{1}{m}}>1,
$$
see ~\cite{Os2} for more details. The following identities, which
can be verified by induction on $j \ge 0$, are valid ~\cite{Os2}:

\begin{equation}
\label{ind1} P_jQ_j^{+}-Q_jP_j^+=O;\;
P_{j+1}Q_{j}^{+}-Q_{j+1}P_{j}^+=A_{j,j+1}^{-1};\;
Q_{j}P_{j+1}^{+}-P_{j}Q_{j+1}^{+}=A_{j+1,j}^{-1};
\end{equation}
\begin{eqnarray}
\label{ind2} Q_{j+1}^{+}A_{j+1,j}Q_{j}-Q_j^{+}A_{j,j+1}Q_{j+1}=
P_{j+1}^{+}A_{j+1,j}P_{j}-P_j^{+}A_{j,j+1}P_{j+1}=O; \nonumber \\
P_{j+1}^{+}A_{j+1,j}Q_{j}-P_j^{+}A_{j,j+1}Q_{j+1}=
Q_j^{+}A_{j,j+1}P_{j+1}-Q_{j+1}^{+}A_{j+1,j}P_{j}=E,
\end{eqnarray}
(the parameter $z$ is omitted for convenience of notation).

Now let $F_j, \; j \ge 0$ be an arbitrary sequence of $n \times n$
matrices. Assuming that $P(z), Q(z)$ and $P^{+}(z), Q^{+}(z)$ are
known, we solve the inhomogeneous equations
\begin{eqnarray}
\label{inh1}
l(U)_j-zU_j=F_j, \\
\label{inh2}
l^{+}(U^+)_j-zU_j^+=F_j, \\
\nonumber j \ge 0, z \in \Bbb C
\end{eqnarray}
by variation of constants on setting
\begin{eqnarray}
\label{u1}
U_j(z)=Q_j(z)C_j^1+P_j(z)C_j^2,\\
\label{u2}
 U_j^{+}(z)=C_j^{1,+}Q_j^{+}(z)+C_j^{2,+}P_j^{+}(z).
\end{eqnarray}
\begin{lemma}
For the matrices $C_j^1,\,C_j^2,\,C_j^{1,+},\,C_j^{2,+}$ the
following recursive representations are valid:
\begin{eqnarray}
\label{c1}
C_j^1=C_k^1-\sum_{i=k}^{j-1}P_i^{+}(z)F_i, \\
\label{c2}
C_j^2=C_k^2+\sum_{i=k}^{j-1}Q_i^{+}(z)F_i;\\
\label{cp1}
C_j^{1,+}=C_k^{1,+}-\sum_{i=k}^{j-1}F_iP_i(z),\\
\label{cp2}
C_j^{2,+}=C_k^{2,+}+\sum_{i=k}^{j-1}F_iQ_i(z);\\
\nonumber k=0,1,\dots \qquad j=k+1,k+2,\dots
\end{eqnarray}
Also, for $j=-1,0$
\begin{equation*}
U_j(z)=Q_j(z)C_0^1+P_j(z)C_0^2; \quad
U_j^{+}(z)=C_0^{1,+}Q_j^{+}(z)+C_0^{2,+}P_j^{+}(z),
\end{equation*}
where $C_0^1,C_0^2,C_0^{1,+},C_0^{2,+}$ - arbitrary constant
matrices.
\end{lemma}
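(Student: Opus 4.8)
The plan is to recast the scalar-style variation-of-constants computation in block-matrix form, so that the homogeneous systems $P,Q$ (resp. $P^{+},Q^{+}$) assemble into a single $2n\times2n$ Casoratian matrix whose inverse is supplied by the bilinear identities (\ref{ind1})--(\ref{ind2}). First I would rewrite the inhomogeneous equation (\ref{inh1}) as a first-order system for the state $\mathbf{U}_j=\bigl(\begin{smallmatrix}U_{j-1}\\ U_j\end{smallmatrix}\bigr)$: solving (\ref{fir}) for $Y_{j+1}$ and (\ref{inh1}) for $U_{j+1}$ gives $\mathbf{U}_{j+1}=T_j\mathbf{U}_j+\mathbf{G}_j$, where $T_j$ is the block companion matrix of the recurrence and $\mathbf{G}_j=\bigl(\begin{smallmatrix}O\\ A_{j,j+1}^{-1}F_j\end{smallmatrix}\bigr)$. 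Since $P$ and $Q$ both solve the homogeneous equation, the Casoratian $\Phi_j=\bigl(\begin{smallmatrix}Q_{j-1}&P_{j-1}\\ Q_j&P_j\end{smallmatrix}\bigr)$ satisfies $\Phi_{j+1}=T_j\Phi_j$, and the ansatz (\ref{u1}) is exactly the block identity $\mathbf{U}_j=\Phi_j\mathbf{C}_j$ with $\mathbf{C}_j=\bigl(\begin{smallmatrix}C_j^1\\ C_j^2\end{smallmatrix}\bigr)$, provided the two components are coherent, i.e. $U_{j-1}=Q_{j-1}C_j^1+P_{j-1}C_j^2$. This coherence is guaranteed by the first identity $P_jQ_j^{+}-Q_jP_j^{+}=O$ in (\ref{ind1}) once the increments below are in place.

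Substituting $\mathbf{U}_j=\Phi_j\mathbf{C}_j$ into the first-order system and cancelling $\Phi_{j+1}$ collapses everything to the single relation $\mathbf{C}_{j+1}-\mathbf{C}_j=\Phi_{j+1}^{-1}\mathbf{G}_j$. The heart of the argument is the explicit inversion of the Casoratian: I claim that
$$
\Phi_{j+1}^{-1}=\begin{pmatrix} P_{j+1}^{+}A_{j+1,j}&-P_j^{+}A_{j,j+1}\\ -Q_{j+1}^{+}A_{j+1,j}&Q_j^{+}A_{j,j+1}\end{pmatrix},
$$
which is verified by multiplying out $\Phi_{j+1}^{-1}\Phi_{j+1}$ and reading off the four blocks from (\ref{ind2}) (the two ``$=E$'' identities give the diagonal blocks, the two ``$=O$'' identities give the off-diagonal blocks). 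Feeding $\mathbf{G}_j$ through this inverse, the factor $A_{j,j+1}$ cancels $A_{j,j+1}^{-1}$ and leaves precisely $C_{j+1}^1-C_j^1=-P_j^{+}F_j$ and $C_{j+1}^2-C_j^2=Q_j^{+}F_j$; summing these increments from $i=k$ to $i=j-1$ telescopes to (\ref{c1}) and (\ref{c2}).

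The relations (\ref{cp1}) and (\ref{cp2}) for the adjoint equation (\ref{inh2}) follow by the mirror-image version of the same computation: one forms the right-handed state row $\bigl(U_{j-1}^{+}\ U_j^{+}\bigr)$, uses (\ref{sec}) to build the companion recurrence acting by right multiplication, and inverts the corresponding Casoratian assembled from $P^{+},Q^{+}$ with the same identities (read with the roles of the unadorned and adorned solutions interchanged), yielding $C_{j+1}^{1,+}-C_j^{1,+}=-F_jP_j$ and $C_{j+1}^{2,+}-C_j^{2,+}=F_jQ_j$. The boundary formulas for $j=-1,0$ are then immediate, since for $k=0$ the sums in (\ref{c1})--(\ref{cp2}) are empty at $j=0$, and at $j=-1,0$ the stated representation in the free constants $C_0^{1},C_0^{2},C_0^{1,+},C_0^{2,+}$ is simply its own definition. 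I expect the only real obstacle to be the bookkeeping in the inversion step: because the entries are non-commuting matrices, the order of factors in each block must be tracked carefully, and one must check at the outset that the coherence identity from (\ref{ind1}) genuinely makes the single-component ansatz (\ref{u1}) equivalent to the block identity $\mathbf{U}_j=\Phi_j\mathbf{C}_j$.
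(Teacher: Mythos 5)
Your proof is correct and is essentially the paper's own argument in block form: the paper defines the increments by the system $Q_j(z)\Delta C_{j+1}^1+P_j(z)\Delta C_{j+1}^2=0$, $A_{j,j+1}(Q_{j+1}(z)\Delta C_{j+1}^1+P_{j+1}(z)\Delta C_{j+1}^2)=F_j$, which is exactly your relation $\Phi_{j+1}\Delta\mathbf{C}_{j+1}=\mathbf{G}_j$, and the paper's elimination step (multiplying on the left by $P_{j+1}^{+}(z)A_{j+1,j}$ and $-P_j^{+}(z)$ and invoking (\ref{ind2})) is precisely the first block row of your explicit Casoratian inverse, with the second row giving $\Delta C_{j+1}^2=Q_j^{+}F_j$. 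The adjoint case in the paper is handled by the mirror-image row system just as you describe (the increments there are checked against (\ref{ind1})), so the two proofs coincide up to packaging.
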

\begin{proof}
For $j \ge 0$ consider the system
\begin{eqnarray*}
\begin{cases}
Q_j(z)\Delta C_{j+1}^1+P_j(z)\Delta C_{j+1}^2 = 0,\\
A_{j,j+1}(Q_{j+1}(z)\Delta C_{j+1}^1+P_{j+1}(z)\Delta C_{j+1}^2) =
F_j
\end{cases}
\end{eqnarray*}
where $\Delta C_{j+1}^i=C_{j+1}^i-C_j^i, \, i=1,2.$ One can check by
direct substitution that if the matrices $C_j^i$ are chosen in this
way, then the sequence $U_j(z)$ defined by (\ref{u1}) is a solution
of (\ref{inh1}). Let us show that this system has a unique solution.
Indeed, multiplying the first equation of the system on the left by
$P_{j+1}^{+}(z)A_{j+1,j}$ and the second equation by $-P_j^{+}(z)$
and summing the resulting equations, we obtain
\begin{eqnarray*}
(P_{j+1}^{+}(z)A_{j+1,j}Q_j(z)-P_j^{+}(z)A_{j,j+1}Q_{j+1}(z))\Delta
C_{j+1}^1+\\
+(P_{j+1}^{+}(z)A_{j+1,j}P_{j}(z)-P_j^{+}(z)A_{j,j+1}P_{j+1}(z))\Delta
C_{j+1}^2=-P_j^{+}(z)F_j
\end{eqnarray*}
Taking into account the identities (\ref{ind2}), we find that
\begin{equation}
\label{d1}
 \Delta C_{j+1}^1=-P_j^{+}(z)F_j.
\end{equation}
Using similar arguments, we can show that
\begin{equation}
\label{d2}
\Delta C_{j+1}^2=Q_j^{+}F_j.
\end{equation}
Hence the above system has a unique solution. Summing (\ref{d1}) and
(\ref{d2}) by $i=k,k+1,\dots,j-1$ we finally obtain (\ref{c1}) and
(\ref{c2}).The formulas (\ref{cp1})-(\ref{cp2}) can be obtained in a
similar manner by using the system
\begin{eqnarray*}
\begin{cases}
\Delta C_{j+1}^{1,+}Q_j^{+}(z)+\Delta C_{j+1}^{2,+}P_j^{+}(z) = 0,\\
(\Delta C_{j+1}^{1,+}Q_{j+1}^{+}(z)+\Delta
C_{j+1}^{2,+}P_{j+1}^{+}(z))A_{j+1,j} = F_j
\end{cases}
\end{eqnarray*}
with respect to $\Delta C_{j+1}^{i,+}=C_{j+1}^{i,+}-C_j^{i,+}, \,
i=1,2.$
\end{proof}
\section{Main results}
Now for $ 1 \le p \le \infty $ consider the Banach spaces $l^p_n$
of sequences $u=(u_{-1},u_0,u_1,\dots)$, such that the vector
column $u_j \in \Bbb C^n,\, $ with the norm $\Vert u \Vert_p =
(\sum_j \vert u_j\vert^p)^{1/p}<\infty,\, 1 \le p <\infty $, where
$ \vert \cdot \vert $ is a certain vector norm. For the case $
p=\infty \, \Vert u \Vert_\infty = \sup_{j}\vert u_j \vert $.

Alongside with (\ref{fir})-(\ref{sec}), consider the equations in
the vectors $u_j, v_j \in \Bbb C^n$:
\begin{eqnarray}
\label{firv}
l(u)_j\equiv& A_{j,j-1}u_{j-1}+A_{j,j}u_{j}+A_{j,j+1}u_{j+1}=z u_{j},\\
\label{secv} l^{+}(v^*)\equiv&
v_{j-1}^{*}A_{j-1,j}+v_{j}^{*}A_{j,j}+v_{j+1}^{*}A_{j+1,j}= z v_{j}^{*},\\
\nonumber  &j \ge 0, z \in \Bbb C.
\end{eqnarray}

Since the polynomials $P(z), Q(z)$ and $P^{+}(z), Q^{+}(z)$ form
the fundamental systems of solutions of the equations (\ref{fir})
and (\ref{sec}) respectively, one can easily see that if all the
solutions of (\ref{firv}) belong to the space $l^p_n$, then
$M_k^p(z) \to \infty$ as $k \to \infty,$ where
\begin{equation}
\label{mp}
 M_k^p(z) \equiv \max{\{\left(\sum_{j=k}^{\infty}\Vert
P_j(z)\Vert^p \right)^{1/p},\left(\sum_{j=k}^{\infty}\Vert
Q_j(z)\Vert^p\right)^{1/p}\}}
\end{equation}
and $\Vert \cdot \Vert$ is a matrix norm. Similarly, if all the
solutions of (\ref{secv}) belong to the space $l^q_n, \, 1 \le q <
\infty,$ then $M_k^{q,+}(z) \to \infty$ as $k \to \infty,$ where
\begin{equation}
\label{mq}
 M_k^{q,+}(z) \equiv \max{\{\left(\sum_{j=k}^{\infty}\Vert
P_j^{+}(z)\Vert^q \right)^{1/q},\left(\sum_{j=k}^{\infty}\Vert
Q_j^{+}(z)\Vert^q\right)^{1/q}\}}.
\end{equation}

\begin{theorem}
\label{th1} If all solutions of (\ref{firv}) are in $ l_n^p , 1\le
p \le \infty $ for some $z=z_0\in \mathbb{C}$ and all solutions of
the equaton (\ref{secv}) are in $ l_n^q,\, 1/q+1/p=1 $, then for
any $ z \in \mathbb{C} $ all solutions of (\ref{firv}) and
(\ref{secv}) are in $ l_n^p $ and $ l_n^q $ respectively.
\end{theorem}
\begin{proof}
First consider the case $ 1 < p < \infty$. The equation (\ref{fir})
can be written in the form
\begin{equation*}
l(Y(z))_{j}-z_{0} Y_j(z)=(z-z_{0})Y_j(z), \quad j \ge 0.
\end{equation*}
This equation can be reduced to (\ref{inh1}) by setting $F_j =
(z-z_0)Y_j$.  Substituting (\ref{c1}) and (\ref{c2}) into (\ref{u1})
we obtain the following representation for $Y_j(z)$:
\begin{eqnarray}
\label{rep1} \nonumber
Y_j(z)= Q_j(z_0)C_k^1+P_j(z_0)C_k^2+ \\
+(z-z_0)\sum_{i=k}^{j-1}\left(P_j(z_0)Q_i^{+}(z_0)-Q_j(Z_0)P_i^{+}(z_0)
\right )Y_i(z), \\ \nonumber
 j=k+1, k+2,\dots \quad k = 0,1,\dots .
\end{eqnarray}
Consider the latter sum in the above equation. Using the matrix
norm properties and applying the H\"older inequality, we find
\begin{eqnarray*}
\Vert
\sum_{i=k}^{j-1}\left(P_j(z_0)Q_i^{+}(z_0)-Q_j(Z_0)P_i^{+}(z_0)
\right )Y_i(z) \Vert \le \\
\le \Vert P_j(z_0) \Vert \sum_{i=k}^{j-1}
  \Vert Q_i^{+}(z_0) \Vert \Vert Y_i(z) \Vert+ \Vert Q_j(z_0) \Vert
 \sum_{i=k}^{j-1} \Vert P_j^{+}(z_0) \Vert \Vert Y_i(z) \Vert \le
 \\ \le ( \Vert P_j(z_0) \Vert + \Vert Q_j(z_0) \Vert
 )M_k^{q,+}(z_0)\left (\sum_{i=k}^{j-1} \Vert Y_i(z) \Vert^p\right
 )^{1/p},
\end{eqnarray*}
where $M_k^{q,+}(z_0)$ is defined by (\ref{mq}). Set $N_{k,j}^p
=\left (\sum_{i=k}^{j-1} \Vert Y_i(z) \Vert^p\right)^{1/p}$ then for
$Y_j(z)$ we get
\begin{equation*}
\Vert Y_j(z) \Vert \le (C_k+\vert z-z_0\vert
M_k^{q,+}(z_0)N_{k,j}^p)( \Vert P_j(z_0) \Vert + \Vert Q_j(z_0)
\Vert), \; j=k+1,k+2,\dots.
\end{equation*}
where $C_k = \max{C_k^1,C_k^2}$. Obviously, the above inequality
also holds when $j=k$. Therefore, for $i=k, k+1,\dots, j-1$ we have:
\begin{equation*}
\Vert Y_i(z) \Vert \le (C_k+\vert z-z_0\vert
M_k^{q,+}(z_0)N_{k,j}^p)( \Vert P_i(z_0) \Vert + \Vert
Q_i(z_0)\Vert).
\end{equation*}
 Now we raise both sides of these inequalities to the $p$-th
power and perform the summation from $i=k$ to $j-1$. Then,
extracting the $p$-th root from the both sides and applying the
Minkowski inequality, we finally get
\begin{equation*}
N_{k,j}^p \le 2C_k M_k^p(z_0)+2\vert z-z_0 \vert M_k^{q,+}(z_0)
M_k^{p}(z_0)N_{k,j}^p.
\end{equation*}
From the assumption of the theorem it follows that both $
M_k^{p}(z_0)$ and $M_k^{q,+}(z_0)$ tend to zero as $k \to \infty$.
Thus we can find an index $k_0$ such that for $k \ge k_0$
\begin{equation*}
\vert z-z_0 \vert M_k^{q,+}(z_0) M_k^{p}(z_0) \le \frac{1}{4}.
\end{equation*}
and therefore
\begin{equation*}
N_{k,j}^p \le 4 C_k M_k^p(z_0)
\end{equation*}
The right-hand side of the above inequality is independent of $j$,
and therefore, $N_{k,j}^p$ has a limit as $j \to \infty$. Since
$\{Y_i(z)\}$ is an arbitrary sequence, it implies that all the
solutions of (\ref{firv}) belong to the space $l^p_n$. For the
equation (\ref{sec}) written in the form
\begin{equation*}
l^{+}(Y^{+}(z))_{j}-z_{0} Y_j^{+}(z)=(z-z_{0})Y_j^{+}(z), \quad j
\ge 0,
\end{equation*}
we obtain the following representation for $Y_j^{+}(z)$ by
substituting (\ref{cp1})-(\ref{cp2}) into (\ref{u2}):
\begin{eqnarray}
\label{rep2} \nonumber
Y_j^{+}(z)= C_k^{1,+}Q_j^{+}(z_0)+C_k^{2,+}P_j^{+}(z_0)+ \\
+(z-z_0)\sum_{i=k}^{j-1}Y_i^{+}(z)\left(Q_i(z_0)P_j^{+}(z_0)-P_i(Z_0)Q_j^{+}(z_0)
\right ), \\ \nonumber
 j=k+1, k+2,\dots \quad k = 0,1,\dots .
\end{eqnarray}
By applying to this formula the same arguments as to (\ref{rep1}),
we find that all the solutions of (\ref{secv}) belong to the space
$l^q_n$.

The case $p=1 (q=\infty) $ is considered separately on the basis of
similar arguments applied to (\ref{rep1}) and (\ref{rep2}). Here
instead of $M_k^{q,+}(z_0)$ one can take
\begin{equation*}
M^{+}(z_0)=\max{ \{\sup_j \Vert P_j^{+}(z_0) \Vert,\sup_j \Vert
Q_j^{+}(z_0) \Vert\}},
\end{equation*}
and $M_k^p(z_0)=M_k^1(z_0)$ is same as above.
\end{proof}

A closer examination of the above proof allows one to establish the
following generalization of the result obtained.
\begin{theorem}
If all solutions of the equations
\begin{equation*}
l(u)_j=0_n, \quad l^{+}(v)_j=0_n^{*}, j \ge 0,
\end{equation*}
where the zero vector column $0_n \in \Bbb C^n$ belong to the spaces
$ l_n^p$ and $ l_n^q$ respectively, where $1/p+1/q=1$, then this is
also true for the solutions of perturbed equations
\begin{equation*}
l(u)_j=F_j u_j, \quad l^{+}(v)_j=v_j^{*}G_j, \quad j \ge 0,
\end{equation*}
where $F_j$ and $G_j \in \Bbb C^{n \times n},$ and the conditions
\begin{equation*}
\sup_{j\ge 0} \Vert F_j \Vert <\infty; \quad \sup_{j\ge 0} \Vert G_j
\Vert <\infty
\end{equation*}
are held.
\end{theorem}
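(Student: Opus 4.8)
The plan is to recognize this statement as the structural core of Theorem~\ref{th1}: there the family $l(Y)_j=zY_j$ was rewritten as the inhomogeneous equation $l(Y)_j-z_0Y_j=(z-z_0)Y_j$ and solved by variation of constants against the fundamental solutions at the base point $z_0$. Here I would take the base point to be $z=0$, so that the unperturbed equation $l(u)_j=0_n$ is exactly (\ref{firv}) at $z=0$, with fundamental matrix solutions $P_j(0),Q_j(0)$ and $P_j^{+}(0),Q_j^{+}(0)$. The perturbed equation $l(u)_j=F_ju_j$ then reads $l(u)_j-0\cdot u_j=F_ju_j$, i.e.\ it is (\ref{inh1}) with $z=0$ and a solution-dependent but causal source $F_iu_i$. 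Substituting the coefficients (\ref{c1})--(\ref{c2}) of the Lemma into (\ref{u1}) I obtain, exactly as in the derivation of (\ref{rep1}), the representation
\begin{equation*}
u_j=Q_j(0)C_k^1+P_j(0)C_k^2+\sum_{i=k}^{j-1}\bigl(P_j(0)Q_i^{+}(0)-Q_j(0)P_i^{+}(0)\bigr)F_iu_i,
\end{equation*}
valid for $j=k+1,k+2,\dots$ and any $k\ge 0$, with $C_k^1,C_k^2$ the appropriate constant columns.

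The second step is the norm estimate, carried out verbatim as in the proof of Theorem~\ref{th1}, the only change being that the scalar factor $|z-z_0|$ is replaced by the uniform bound $\|F\|_\infty:=\sup_{j\ge 0}\|F_j\|<\infty$. Since $\|F_iu_i\|\le\|F_i\|\,\|u_i\|\le\|F\|_\infty\|u_i\|$, applying the matrix-norm properties together with the H\"older inequality (whose tail factor is $M_k^{q,+}(0)$ from (\ref{mq})) to the sum above yields
\begin{equation*}
\|u_j\|\le\bigl(C_k+\|F\|_\infty M_k^{q,+}(0)\,N_{k,j}^p\bigr)\bigl(\|P_j(0)\|+\|Q_j(0)\|\bigr),
\end{equation*}
where $N_{k,j}^p=\bigl(\sum_{i=k}^{j-1}\|u_i\|^p\bigr)^{1/p}$ and $C_k=\max\{C_k^1,C_k^2\}$. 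Raising to the $p$-th power, summing over $i=k,\dots,j-1$, extracting the root and using Minkowski's inequality then gives the self-consistent bound
\begin{equation*}
N_{k,j}^p\le 2C_kM_k^p(0)+2\|F\|_\infty M_k^{q,+}(0)M_k^p(0)\,N_{k,j}^p.
\end{equation*}

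By hypothesis all solutions of $l(u)_j=0_n$ lie in $l_n^p$ and all solutions of $l^{+}(v)_j=0_n^{*}$ lie in $l_n^q$, so the tails $M_k^p(0)$ and $M_k^{q,+}(0)$ from (\ref{mp})--(\ref{mq}) tend to $0$ as $k\to\infty$. Hence I can fix $k_0$ with $\|F\|_\infty M_k^{q,+}(0)M_k^p(0)\le 1/4$ for $k\ge k_0$, which absorbs the last term and leaves $N_{k,j}^p\le 4C_kM_k^p(0)$, a bound independent of $j$; letting $j\to\infty$ shows $u\in l_n^p$. The assertion for $l^{+}(v)_j=v_j^{*}G_j$ follows by the symmetric argument applied to (\ref{rep2}) at $z=0$, with $\|G\|_\infty=\sup_j\|G_j\|$ in place of $\|F\|_\infty$, giving $v\in l_n^q$. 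The endpoint cases $p=1$ ($q=\infty$) and $p=\infty$ ($q=1$) I would treat separately, as in Theorem~\ref{th1}, replacing $M_k^{q,+}(0)$ by $\sup_j\|P_j^{+}(0)\|$ and $\sup_j\|Q_j^{+}(0)\|$.

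The one point that genuinely requires the new hypothesis, and hence the main obstacle, is isolating a single constant in front of $N_{k,j}^p$: in Theorem~\ref{th1} this was the fixed scalar $|z-z_0|$, whereas now the multiplier $F_i$ varies with $i$ and acts on $u_i$ from the left, so it cannot be pulled out of the sum directly. The estimate $\|F_iu_i\|\le\|F\|_\infty\|u_i\|$ is precisely what lets H\"older's inequality extract the single constant $\|F\|_\infty$, so the uniform boundedness $\sup_j\|F_j\|<\infty$ (and likewise $\sup_j\|G_j\|<\infty$) is indispensable; without it the self-improving inequality for $N_{k,j}^p$ breaks down and the bootstrap in the final step fails.
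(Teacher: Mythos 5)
Your proposal is correct and follows exactly the route the paper intends: the paper gives no separate proof of this theorem, stating only that it follows from ``a closer examination'' of the proof of Theorem~1, and your adaptation --- treating $F_ju_j$ as the inhomogeneous term at base point $z=0$, bounding it via $\sup_j\Vert F_j\Vert$ so that H\"older and Minkowski yield the same self-consistent inequality for $N_{k,j}^p$, and absorbing the last term once $\Vert F\Vert_\infty M_k^{q,+}(0)M_k^p(0)\le 1/4$ --- is precisely that examination carried out in full, including the symmetric argument for $l^{+}(v)_j=v_j^{*}G_j$ and the endpoint cases. Your closing remark correctly identifies the only genuinely new ingredient, namely that the uniform bound on $\Vert F_j\Vert$ plays the role of the fixed scalar $\vert z-z_0\vert$.
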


Note that because of the embedding $ l_n^{p_1} \supset l_n^{p_2} $
for $p_1<p_2$ we have that if $ 1 \le p \le 2 $ and the condition of
the Theorem 1 is fulfilled, then all the solutions of (\ref{firv})
and (\ref{secv}) are in $l^p$ for any $ z \in \Bbb C$.

Now consider the matrix $A$ in the symmetric case
\begin{equation}
\label{simm}
 A_{j,j}=A_{j,j}^{*},\quad A_{j+1,j}=A_{j,j+1}>0, \quad
\text{(here * denotes Hermitian conjugation)}
\end{equation}
so $A$ is a Jacobi matrix. In this case we have $ P^{+}(z) =
P^{*}(z)$ and $ Q^{+} (z)= Q^{*}(z) $ for $z \in \Bbb R$ (and the
equation (\ref{secv}) is a conjugate to (\ref{firv})). In view of
the above, we get the following result:
\begin{theorem}
If all solutions of the equation (\ref{firv}) with matrix
coefficients satisfying (\ref{simm}) for some $z=z_0\in \mathbb{R}$
belong to the space $l_n^p, \, 1\le p \le 2$, then this is also true
for any $ z \in \mathbb{C}$.
\end{theorem}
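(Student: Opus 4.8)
The plan is to reduce Theorem 3 to Theorem \ref{th1} by using the symmetry (\ref{simm}) to manufacture, for free, the second hypothesis that Theorem \ref{th1} requires. The assumption here is one-sided: it controls only the solutions of (\ref{firv}) at the real point $z_0$, whereas Theorem \ref{th1} also demands that every solution of the conjugate equation (\ref{secv}) lie in $l_n^q$ with $1/p+1/q=1$. The whole strategy is therefore to show that in the Jacobi case this second condition is automatic once the first holds.

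First I would make the correspondence between the two solution spaces explicit. As recorded just after (\ref{simm}), under the symmetry assumptions and for real $z_0$ one has $P^{+}(z_0)=P^{*}(z_0)$ and $Q^{+}(z_0)=Q^{*}(z_0)$, so that (\ref{secv}) at $z_0$ is exactly the Hermitian conjugate of (\ref{firv}) at $z_0$. Concretely, taking adjoints in (\ref{secv}) and using that under (\ref{simm}) the diagonal blocks satisfy $A_{j,j}=A_{j,j}^{*}$ while the off-diagonal blocks are Hermitian with $A_{j,j\pm1}=A_{j\pm1,j}$, converts $l^{+}(v^{*})_j=z_0 v_j^{*}$ into $l(v)_j=\bar z_0 v_j=z_0 v_j$. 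Hence $u\mapsto u^{*}$ is a bijection between the solutions of (\ref{firv}) and of (\ref{secv}) at $z_0$, and since conjugate transposition leaves the vector norm unchanged, $|v_j^{*}|=|v_j|$, it preserves membership in every $l_n^s$. Consequently the hypothesis that all solutions of (\ref{firv}) lie in $l_n^p$ yields at once that all solutions of (\ref{secv}) lie in $l_n^p$.

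Next I would promote this to the exponent $q$. Because $1\le p\le 2$, the conjugate index satisfies $q\ge 2\ge p$, and for sequence spaces the inclusion $l_n^p\subset l_n^q$ holds whenever $p\le q$; thus the solutions of (\ref{secv}), already known to lie in $l_n^p$, automatically lie in $l_n^q$. Both hypotheses of Theorem \ref{th1} are now in force at $z_0$: all solutions of (\ref{firv}) belong to $l_n^p$ and all solutions of (\ref{secv}) belong to $l_n^q$. Invoking Theorem \ref{th1} then gives that for every $z\in\mathbb{C}$ all solutions of (\ref{firv}) remain in $l_n^p$, which is precisely the assertion of Theorem 3.

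The one step carrying real content is the reduction of (\ref{secv}) to (\ref{firv}) via Hermitian conjugation, and this is where the assumptions $z_0\in\mathbb{R}$ and (\ref{simm}) are indispensable: the identities $P^{+}(z_0)=P^{*}(z_0)$ and $Q^{+}(z_0)=Q^{*}(z_0)$ fail for non-real $z_0$, so the clean pairing of the two solution spaces is genuinely a real-axis phenomenon. Everything else, namely the norm-preserving identification of solution spaces and the inclusion $l_n^p\subset l_n^q$, is routine, the substantive analytic work having already been absorbed into the proof of Theorem \ref{th1}. I would expect the only place requiring care to be the bookkeeping when taking adjoints of (\ref{secv}), ensuring that the sub- and super-diagonal blocks match up correctly under (\ref{simm}).
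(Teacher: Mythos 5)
Your proposal is correct and follows essentially the same route as the paper, which likewise observes that under (\ref{simm}) and for real $z_0$ one has $P^{+}(z_0)=P^{*}(z_0)$ and $Q^{+}(z_0)=Q^{*}(z_0)$ (so that (\ref{secv}) is the conjugate of (\ref{firv}) at $z_0$), uses the embedding of $l_n^p$ into $l_n^q$ for $1\le p\le 2\le q$ to supply the second hypothesis, and then invokes Theorem \ref{th1}. Your write-up is in fact more careful than the paper's remark, which states the embedding with the inclusion reversed ($l_n^{p_1}\supset l_n^{p_2}$ for $p_1<p_2$), an evident typo that your adjoint bookkeeping and correctly oriented inclusion $l_n^p\subset l_n^q$ silently repair.
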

For the case $p=2$ this theorem was proved in ~\cite{K-M} (Theorem
1). Now consider the case $p>2$. As an example, take the following
matrix $A$:
\begin{equation*}
 A_{j,j}= O,\quad A_{j+1,j}=A_{j,j+1}=(j+1)E, \quad
j \ge 0,
\end{equation*}
where $O$ and $E$ are zero and unit matrices of the second order.
Then for the corresponding equation (\ref{firv}) where $z=0$, all
its solutions are in $l_2^{2+\epsilon}$ for any $\epsilon >0$ (one
easily find  by direct calculation of $ P_n(0)$ and $Q_n(0)$ that
both $ \Vert P_n(0) \Vert $ and $\Vert Q_n(0) \Vert \sim n^{-1/2}$
as $ n \to \infty$, and therefore the condition (\ref{mp}) holds
in this case) . However if we take $z=i$ (the imaginary unit) or
$z=-i$, then there exist the solutions of (\ref{firv}) which
belong to $l_2^{\infty},$ but not tend to zero as $j \to \infty$.
Note that for a scalar Jacobi matrix case a similar result was
obtained in ~\cite{Os1} by using the grouping in block approach
offered in ~\cite{N-J}. Here we can apply similar arguments. Thus
we are coming to the following conclusion:
\begin{theorem}
The  Theorem 3 is not valid for $ p >2$.
\end{theorem}

 \end{document}